\newtheorem{thm}{Theorem}
\newtheorem*{thm*}{Theorem}
\newtheorem{lem}[thm]{Lemma}
\theoremstyle{definition}
\theoremstyle{remark}
\newtheorem{rmk}[thm]{Remark}
\def\co{\colon\thinspace}
\newcommand{\mb}[1]{\mathbb{#1}}
\newcommand{\F}{\mathbb F}
\newcommand{\Ext}{\ensuremath{{\rm Ext}}}
\newcommand{\Sq}{{\rm Sq}}
\newcommand{\THH}{{\rm THH}}
\newcommand{\HF}{{\rm H}{\mb F}}
\newcommand{\smsh}[1]{\ensuremath{\mathop{\wedge}_{#1}}}
\newcommand{\xym}[1]{
\vskip 0.7pc
\centerline{\xymatrix{#1}}
\vskip 0.7pc
}
\title{The spectra $ko$ and $ku$ are not Thom spectra: an approach using $\THH$}
\author{Vigleik Angeltveit, Michael Hill, Tyler Lawson}
\begin{document}
\maketitle

\begin{abstract}
We apply an announced result of Blumberg-Cohen-Schlichtkrull to
reprove (under restricted hypotheses) a theorem of Mahowald: the
connective real and complex $K$-theory spectra are not Thom spectra.
\end{abstract}

The construction of various bordism theories as Thom spectra served as
a motivating example for the development of highly structured ring
spectra.  Various other examples of Thom spectra followed; for
instance, various Eilenberg-Maclane spectra are known to be
constructed in this way \cite{goodmahowald}.  However, Mahowald proved
that the connective $K$-theory spectra $ko$ and $ku$ are not the
$2$-local Thom spectra of any vector bundles, and that the spectrum
$ko$ is not the Thom spectrum of a spherical fibration classified by a
map of H-spaces \cite{evilmahowald}.  Rudyak later proved that $ko$
and $ku$ are not Thom spectra $p$-locally at odd primes $p$
\cite{rudyak}.

There has been a recent clarification of the relationship between Thom
spectra and topological Hochschild homology.  Let $BF$ be the
classifying space for stable spherical fibrations.

\begin{thm*}[Blumberg-Cohen-Schlichtkrull \cite{bcs}]
If $Tf$ is a spectrum which is the Thom spectrum of a 3-fold
loop map $f\co X \to BF$, then there is an equivalence
\[
\THH(Tf) \simeq Tf \smsh{} BX_+.
\]
\end{thm*}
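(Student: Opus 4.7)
The plan is to exploit the symmetric-monoidal nature of the Thom-spectrum functor, which lets multiplicative structures on spaces over $BF$ pass through to ring-spectrum structures on their Thom spectra, and then to unwind the cyclic bar construction geometrically. Since $f\co X\to BF$ is a 3-fold loop map, $X$ is in particular a 3-fold loop space and the underlying 1-fold loop structure on $f$ makes $Tf$ into an $A_\infty$-ring spectrum, so $\THH(Tf)$ is defined as the realization of the cyclic bar construction $[n]\mapsto Tf^{\wedge(n+1)}$.

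First I would identify $\THH(Tf)$ with a Thom spectrum. Because the Thom-spectrum functor preserves smash products and geometric realizations, it sends the unstable cyclic bar construction $B^{cy}(X)$ in spaces over $BF$ (with its induced map to $BF$ coming from the iterated multiplications) to the cyclic bar construction of $Tf$. Explicitly,
\[
\THH(Tf) \;\simeq\; T\bigl(B^{cy}(X)\to BF\bigr).
\]
This step is essentially formal; it uses only the $A_\infty$-structure and the monoidality of $T(-)$.

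Next I would identify the underlying parametrized space $B^{cy}(X)$. For any grouplike $A_\infty$-space one has a natural equivalence $B^{cy}(X)\simeq LBX$, the free loop space of $BX$, fitted into an evaluation fibration $X\to LBX\to BX$. To get the clean form in the theorem, one needs to use the \emph{extra} loop structure to trivialize this fibration \emph{over} $BF$: the higher coherent commutativity afforded by the 3-fold loop hypothesis trivializes the conjugation-type twisting that normally distinguishes $LBX$ from the product, producing an equivalence $B^{cy}(X)\simeq X\times BX$ under which the induced map to $BF$ is the composite $X\times BX\xrightarrow{\mathrm{pr}_X} X\xrightarrow{f} BF$. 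Feeding this product decomposition into $T(-)$ immediately gives $Tf\smsh{}BX_+$.

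The main obstacle is the last identification: producing the splitting of the cyclic bar construction compatibly with the map to $BF$. The weakest hypothesis (a single loop map) is only enough to get $\THH(Tf)\simeq T(LBX\to BF)$, and obtaining the untwisted product $X\times BX$ requires enough higher coherence to kill the conjugation action together with the way it couples to $f$. This is where the 3-fold loop hypothesis does the real work, and it constitutes the principal technical content of the Blumberg--Cohen--Schlichtkrull argument; everything else is bookkeeping around the symmetric monoidality of the Thom-spectrum construction.
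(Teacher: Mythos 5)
You should know at the outset that the paper contains no proof of this statement: it is quoted as an announced theorem of Blumberg--Cohen--Schlichtkrull \cite{bcs} and used as a black box (the whole point of the paper is to combine it with the $\THH$ computations of \cite{ahl}), so there is no in-paper argument to measure yours against. Judged on its own terms, your outline does follow the strategy of the Blumberg--Cohen--Schlichtkrull argument: (i) monoidality of the Thom spectrum functor identifies $\THH(Tf)$ with the Thom spectrum of a map $B^{\mathrm{cy}}(X) \to BF$; (ii) grouplikeness (automatic for a loop space) identifies $B^{\mathrm{cy}}(X)$ with the free loop space $LBX$; (iii) the extra deloopings are used to split $LBX$ as $X \times BX$ over $BF$. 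As a roadmap this is sound.

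But it is a roadmap rather than a proof, and the two places you wave at are exactly where the content lives. First, the claim that $T(-)$ ``preserves smash products and geometric realizations'' strongly enough to commute with the cyclic bar construction is not formal: producing a point-set (or coherently homotopical) model in which the Thom spectrum functor is monoidal on spaces over $BF$ is precisely why \cite{bcs} works with structured models such as $I$-spaces and $\ast$-modules, and without some such framework your step (i) does not go through. Second, your account of where the $3$-fold hypothesis enters is not quite the actual mechanism: the splitting $LBX \simeq X \times BX$ of spaces already exists once $BX$ is an $H$-space, i.e.\ for a $2$-fold loop map; what fails at that level is that the induced map $X \times BX \to BF$ is not $f \circ \mathrm{pr}_X$ but $f$ corrected by a twist built from the Hopf map $\eta \in \pi_1 \mb S$, and the third delooping is what trivializes that $\eta$-twist compatibly with $f$. (This is also why the present paper flags that the forthcoming \cite{bcs} treats $1$- and $2$-fold loop maps separately, with weaker conclusions.) If you intend this as a proof rather than a plan, the monoidal model and the $\eta$-trivialization are the steps you must actually carry out.
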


(Here $\THH(Tf)$ is the topological Hochschild homology of the Thom
spectrum $Tf$, which inherits an $E_3$-ring spectrum structure
\cite[Chapter IX]{lms}.)  Paul Goerss asked whether this theorem could
be combined with the previous computations of the authors \cite{ahl}
to give a proof that $ku$ and $ko$ are not Thom spectra under this
``$3$-fold loop'' hypothesis.  This paper is an affirmative answer to
that question.

The forthcoming Blumberg-Cohen-Schlichtkrull paper includes a more
careful analysis of the topological Hochschild homology of Thom
spectra in the case of $1$-fold and $2$-fold loop maps, and should
provide weaker conditions for these results to hold.  However, in
order to construct $\THH$ one must assume that the Thom spectrum has
some highly structured multiplication, which is not part of the
assumptions in Mahowald's original proof that $ko$ is not a Thom
spectrum.

\section{The case of $ku$}

Assume that $ku$, $2$-locally, is the Thom spectrum $Tf$ of a $3$-fold
loop map.  We then obtain an equivalence:
\[
\THH(ku) \simeq ku \smsh{} X_+ \simeq ku \smsh{ko} (ko \smsh{} X_+)
\]
Splitting off a factor of $ku$ from the natural unit $S^0 \to X_+$, it
thus suffices to show there is no $ko$-module $Y$ such that smashing
over $ko$ with $ku$ gives the reduced object $\overline \THH(ku)$.

The homotopy of $\overline \THH(ku)$ in degrees below 10 has
$ku_*$-module generators $\lambda_1$ and $\lambda_2$ in degrees 3 and
7 respectively, subject only to the relation $2 \lambda_2 = v_1^2
\lambda_1$ for $v_1$ the Bott element in $\pi_2 ku$ \cite{ahl}.  A
skeleton for such a complex $Y$ could be constructed with cells in
degree 3, 7, and 8.

If we had such a $ko$-module $Y$, we could iteratively construct maps
\[
\Sigma^3 ko \to \Sigma^3 ko \vee \Sigma^7 ko \to (\Sigma^3 ko \vee
\Sigma^7 ko) \cup_\phi (C \Sigma^7 ko) \to Y
\]
by attaching a 3-cell, a 7-cell (which has 0 as the only possible
attaching map), and an 8-cell via some attaching map $\phi$.

However, this requires us to lift the attaching map for the 8-cell
along the map
\xym{
\pi_7 (\Sigma^3 ko \vee \Sigma^7 ko) \ar[d] \ar@{=}[r] & \mb Z \oplus
\mb Z \ar[d]_{(2,1)} \\
\pi_7 (\Sigma^3 ku \vee \Sigma^7 ku) \ar@{=}[r] & \mb Z \oplus
\mb Z.
}
The element we need to lift is $(v_1^2,2)$, but the image is generated
by $(2v_1^2, 0)$ and $(0,1)$.

This contradiction is essentially the same as that given by Mahowald
assuming that $ku$ is the Thom spectrum of a spherical fibration on a
$1$-fold loop space \cite{evilmahowald}.

\begin{rmk}
The analogue of this argument fails for the Adams summand at odd
primes.  The essential difference is that at odd primes, the element
$v_1^p$ in the Adams-Novikov spectral sequence is a nullhomotopy of
$p^2$ times the $p$'th torsion generator in the image of the
$J$-homomorphism, whereas at $p=2$ the element $v_1^2$ is a
nullhomotopy of $4\nu + \eta^3$.
\end{rmk}

\section{The case of $ko$}

Similarly to the previous case, suppose that we had $\THH(ko) \simeq
ko \smsh{} Y_+$ for a space $Y$, and hence the reduced object
satisfies $\overline \THH(ko) \simeq ko \smsh{} Y$.  Then
\[
\overline \THH(ko;\HF_2) \simeq \HF_2 \smsh{ko} \overline \THH(ko)
\simeq \HF_2 \smsh{} Y.
\]
The ${\cal A}_*$-comodule structure on $H_*(Y)$ would then be a lift
of the coaction of ${\cal A}(1)_* = \pi_* (\HF_2 \smsh{ko} \HF_2)$ on
$\overline \THH_*(ko;\HF_2)$.  In particular, this determines the
action of $\Sq^1$ and $\Sq^2$.

The groups $\THH_*(ko;\HF_2)$ through degree 20 have
generators in degree $0$, $5$, $7$, $8$, $12$, $13$, $15$, $16$, and
$20$. The groups as a module over $\mathcal A(1)$ are presented in
Figure~\ref{fig:HTHHko}. In this, dots represent generators of the
corresponding group, straight lines represent the action of $Sq^1$,
curved lines represent $Sq^2$, and the box indicates that the entire
picture repeats polynomially on the class in degree $16$.

\begin{figure}[ht!]
\centering
\includegraphics{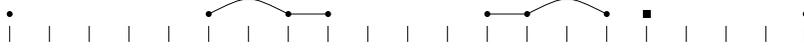}
\caption{$\pi_\ast(THH(ko;H\F_2))$ as an $\mathcal A(1)$-module}
\label{fig:HTHHko}
\end{figure}

\begin{lem}
Suppose that there was a lift of the $20$-skeleton of $\overline
\THH(ko)$ to a spectrum $W$ with cells in degrees $5$, $7$, $8$, $12$,
$13$, $15$, $16$, and $20$.  Then the attaching map for the $16$-cell
over the sphere would be $2\nu$-torsion.
\end{lem}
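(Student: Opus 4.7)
The plan is to exploit the existence of the $20$-cell of $W$ to derive the stated constraint on the attaching map $\phi\co S^{15}\to W^{(15)}$ of the $16$-cell.  Since $W$ has no cells in dimensions $17$, $18$, or $19$, the $20$-cell is attached by some map $\psi\co S^{19}\to W^{(16)}$, where $W^{(16)}=W^{(15)}\cup_\phi e^{16}$.  The cofiber sequence $W^{(15)}\to W^{(16)}\to S^{16}$ produces a long exact sequence
\[
\pi_{19}(W^{(16)}) \xrightarrow{\;p\;} \pi_{19}(S^{16}) = \pi_3(S) \xrightarrow{\;\delta\;} \pi_{18}(W^{(15)}),
\]
in which $\delta$ is stable composition with (a suspension of) $\phi$; exactness yields $p(\psi)\cdot\phi=0$.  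It therefore suffices to identify $p(\psi)\in\pi_3(S)=\mb{Z}/24\{\nu\}$ as a $2$-local unit multiple of $2\nu$.

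For this identification I would smash with $ko$.  By hypothesis $ko\wedge W$ is the $20$-skeleton of $\overline{\THH}(ko)$, whose $\pi_\ast(ko)$-module structure is computed in \cite{ahl}.  A cellular-spectral-sequence analysis shows that the only nonzero $E_1$-contributions $\pi_{19-n}(ko)$ from cell dimensions $n$ below $20$ come from $n=7$ (via $\pi_{12}(ko)=\mb{Z}\{\alpha\beta\}$) and $n=15$ (via $\pi_4(ko)=\mb{Z}\{\alpha\}$); the computation in \cite{ahl} pins down the $ko$-module attaching of the $20$-cell as having a nonzero $\alpha$-term onto the $15$-cell.  Since $\pi_4(S)=0$, this $\alpha$ cannot come from a direct sphere-level component of $\psi$ onto the $15$-cell, and must instead be produced through the subquotient $W^{(16)}/W^{(14)}\simeq\Sigma^{15}(S/n)$, where $n$ is the $15$-component of $\phi$ (an even integer, forced by $\Sq^1 g_{15}=g_{16}$).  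The preimage of $\alpha\in\pi_4(ko/n)$ in $\pi_4(S/n)$ boundaries to a $2$-local unit multiple of $2\nu\in\pi_3(S)$, identifying $p(\psi)$.

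Combining $p(\psi)\cdot\phi=0$ with $p(\psi)$ being a $2$-local unit multiple of $2\nu$ gives $2\nu\cdot\phi=0$, as claimed.

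The main technical obstacle is the precise identification of $p(\psi)$ as $2\nu$ (as opposed to $4\nu$ or $\nu$).  This requires carefully chasing the cellular spectral sequence for $\pi_\ast(ko\wedge W^{(16)})$ to determine the $2$-adic valuation of the $15$-component of $\phi$ and the resulting Moore-spectrum subquotient, together with checking that higher-filtration contributions to $\psi$ (such as a possible $7$-cell component valued in $\pi_{12}(ko)=\mb{Z}\{\alpha\beta\}$) do not disturb the identification.
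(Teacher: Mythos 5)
Your overall strategy --- take the attaching map $\psi$ of the $20$-cell, push it to the top cell $S^{16}$ of $W^{(16)}$, identify the resulting element $p(\psi)\in\pi_3(\mb S)$ by comparison with the $ko$-module structure of $\overline\THH(ko)$, and conclude $\phi\circ(2\nu)=0$ by exactness --- is exactly the paper's, and the exactness bookkeeping is fine. The problem is that you have deferred precisely the step that \emph{is} the lemma: pinning down $p(\psi)$ as $2\nu$ rather than $\nu$ or $4\nu$. Everything turns on the $2$-adic valuation of the degree $n$ with which the $16$-cell attaches to the $15$-cell, and the one concrete claim you make about it is false. You assert that $n$ is even because $\Sq^1 g_{15}=g_{16}$; but by the computation of \cite{ahl} quoted in the paper, the $16$-cell is attached to \emph{four} times the $15$-cell over $ko$ (hence over the sphere, since $\pi_0\mb S\to\pi_0 ko$ is an isomorphism), so in fact $\Sq^1 g_{15}=0$ and $n=4$ up to a $2$-local unit. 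Worse, if $\Sq^1 g_{15}=g_{16}$ did hold you would have $n\equiv 2\pmod 4$, the subquotient would be $\Sigma^{15}\mb S/2$, and $\pi_4(\mb S/2)\cong\mb Z/2$ has boundary generated by $4\nu$ --- so your stated premise, pursued to its end, yields the wrong multiple of $\nu$.

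Even after fixing $n=4$, two further points must be supplied rather than gestured at. First, ``a nonzero $\alpha$-term onto the $15$-cell'' is not enough: if that component were $2\alpha$ rather than $\alpha$, its reduction in $\pi_4(ko/4)\cong\mb Z/4$ would be twice a generator and the boundary would again be $4\nu$; you need that the component is an odd multiple of the generator $\alpha=2v_1^2$, which is what \cite{ahl} actually provides. Second, to transport the generator of $\pi_4(ko/4)$ back to the sphere you need the Hurewicz map $\pi_4(\mb S/4)\to\pi_4(ko/4)$ to be injective (it is in fact an isomorphism $\mb Z/4\to\mb Z/4$); otherwise ``the preimage of $\alpha$ in $\pi_4(\mb S/n)$'' is not well defined and its boundary in $\pi_3(\mb S)$ can be shifted by the boundary of a kernel element. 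This isomorphism is the single genuinely nontrivial homotopy-theoretic input in the paper's proof, and it is missing from yours.
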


\begin{proof}
This is a consequence of the calculations of \cite{ahl}, as follows.
Modulo the image of the $13$-skeleton, the reduced
object $\overline \THH(ko)$ has cells in degrees $15$, $16$, and
$20$, with the generator in degree $16$ attached to $4$ times
the generator in degree $15$ and the generator in degree $20$ attached
to $2v_1^2$ times the generator in degree $15$.

However, the Hurewicz map $\mb S/4 \to ko/4$ is an isomorphism on
$\pi_4$, and so any lift of the attaching map for the $20$-cell would
have to lift to a generator of $\pi_{19} (\Sigma^{15} \mb S/4)$.
However, the image of this generator modulo the $15$-skeleton is the
element $2\nu \in \pi_{19}(\Sigma^{16} \mb S)$.  This forces the
attaching map for the $16$-cell to be $2\nu$-torsion, as desired.
\end{proof}

We now apply this to show the nonexistence of such a spectrum by
assuming that we have already constructed a $16$-skeleton for it.

\begin{thm}
Suppose that we have ($2$-locally) a suspension spectrum $Z$ of a space
such that $ko \smsh{} Z$ agrees with $\overline \THH(ko)$ through
degree $19$, with cells in degrees $5$, $7$, $8$, $12$, $13$, $15$, and
$16$.  The attaching map for the next necessary cell (in degree $20$)
does not lift to the homotopy of $Z$. 
\end{thm}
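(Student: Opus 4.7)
The plan is an Atiyah--Hirzebruch spectral sequence comparison. I will show that the required $20$-cell attaching map $\alpha \in \pi_{19}(ko \smsh{} Z)$ has a nonzero class at skeletal filtration $15$, while the image of the unit-induced map $\pi_{19}(Z) \to \pi_{19}(ko \smsh{} Z)$ is zero on every associated-graded piece of that filtration and hence is the zero subgroup. Since $\alpha$ is nonzero, this immediately precludes any lift.

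The first step is to pin down the filtration of $\alpha$ using the Lemma. Since $2$-locally the $16$-cell of $Z$ attaches to the $15$-cell by multiplication by $4$ and $Z$ has no $14$-cell, the quotient $Z/Z^{(13)}$ through degree $16$ is $\Sigma^{15} \mb S/4$. The Lemma identifies the image of $\alpha$ in $\pi_{19}((ko \smsh{} Z)/(ko \smsh{} Z^{(13)})) = \pi_4(ko/4) \cong \mb Z/4$ as a generator (the content of the phrase ``$2v_1^2$ times the degree-$15$ generator''), so $\alpha$ lies in $F_{15}$ but not in $F_{14} = F_{13}$, with nonzero class in $E_\infty^{15,4} \subseteq \pi_4(ko)$.

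Next I would compare the two Atiyah--Hirzebruch spectral sequences. Both have $E_2^{p,q} = H_p(Z; \pi_q(-))$ concentrated at the cell dimensions $p \in \{5, 7, 8, 12, 13, 15, 16\}$, and the unit $\mb S \to ko$ acts on $E_2$ by the coefficient map $\pi_q(\mb S) \to \pi_q(ko)$. At each bidegree $(p, 19-p)$ of interest, namely for $q \in \{14, 12, 11, 7, 6, 4, 3\}$, this coefficient map is zero: for $q \in \{3, 6, 7, 11, 14\}$ the target $\pi_q(ko)$ vanishes, and for $q \in \{4, 12\}$ the $2$-local source $\pi_q(\mb S)_{(2)}$ vanishes. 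Hence the unit-induced map on every $E_\infty^{p, 19-p}$ is zero.

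Since the skeletal filtration of $\pi_{19}(ko \smsh{} Z)$ is bounded below ($Z$ has no cells below dimension $5$), vanishing on every associated-graded piece forces the image of $\pi_{19}(Z) \to \pi_{19}(ko \smsh{} Z)$ to be the zero subgroup. The main computational content is simply the tabulation of $\pi_*(\mb S)_{(2)}$ and $\pi_*(ko)$ through degree $14$ and the identification of $\alpha$'s image in $\pi_4(ko/4)$ as a generator; there is no substantive technical obstacle beyond this. The suspension-spectrum hypothesis does not enter the argument directly but is needed in the overall set-up to interpret $Z$ as $\Sigma^\infty Y$ for the relevant $Y$.
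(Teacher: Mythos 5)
Your argument breaks at the final step: the claim that a map of filtered groups inducing zero on every associated-graded piece must have zero image is false. Such a map only sends $F_p$ into $F_{p-1}$, and this cannot be iterated, because for $x$ of filtration exactly $p$ the class of $u(x)$ in $F_{p-1}/F_{p-2}$ need not lie in the image of $F_{p-1}$ of the source, so the associated-graded hypothesis says nothing about it. This is not a removable technicality here; the filtration jump you are ignoring is precisely the phenomenon the theorem turns on. Consider the two-cell complex $\mb S/4$ and the map $\mb S/4 \to ko/4$ invoked in the Lemma. In total degree $4$ its Atiyah--Hirzebruch comparison is zero on both graded pieces ($E^{0,4}$ of the source is a subquotient of $\pi_4 \mb S = 0$, and $E^{1,3}$ of the target is a subquotient of $\pi_3 ko = 0$), yet the map is an isomorphism $\mb Z/4 \to \mb Z/4$ on $\pi_4$ --- a fact your own first paragraph relies on. Your second and third paragraphs, applied verbatim to this complex, would ``prove'' that isomorphism is zero. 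The same jump is available for $Z$: an element of $\pi_{19}(Z)$ detected by $2\nu$ on the $16$-cell lands in filtration $\le 15$ of $\pi_{19}(ko \smsh{} Z)$, and whether its leading term in $E_\infty^{15,4}$ can match that of $\alpha$ is exactly the question of whether a generator of $\pi_{19}(Z/Z^{(13)}) \cong \pi_4(\mb S/4)$ lifts to $\pi_{19}(Z)$. Your coefficient-group tabulation cannot see this.

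What remains to be proved is therefore the real content of the theorem. The obstruction to lifting that generator across the cofiber sequence $Z^{(15)} \to Z \to \Sigma^{16}\mb S$ is the product $2\nu h \in \pi_{18}(Z^{(15)})$, where $h$ is the attaching map of the $16$-cell; this is the reduction already accomplished by the Lemma (``the attaching map for the $16$-cell would be $2\nu$-torsion''). The body of the proof must then identify $h$ --- by decomposing the $15$-skeleton as $U \to S \to Q$, computing the Adams spectral sequences of the relevant function spectra, and lifting the known $ko$-module attaching map to the sphere --- and finally establish the hidden extension $2\nu h \neq 0$, which requires a Massey product argument. None of this is circumvented by your filtration count; the parts of your proposal that are correct (the vanishing of the listed coefficient groups, the identification of $\alpha$'s leading term in $\pi_4(ko/4)$) only reproduce the setup.
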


\begin{proof}
Let $S$ be the $15$-skeleton of $Z$, and $U$ the $8$-skeleton.  There
exists a cofiber sequence
\[
U \to S \to Q \to \Sigma U
\]
where $U$ is the unique connective spectrum whose homology is an
``upside-down question mark'' starting in degree 5, and $Q$ is the
unique connective spectrum whose homology is a ``question mark''
starting in degree 12.  (For this reason, the spectrum $S$ is
informally called the ``Spanish question.'')  By the previous lemma,
it suffices to show that any attaching map for the $16$-cell cannot be
$2\nu$-torsion.

The following charts display the final results of the Adams spectral
sequence for the homotopy of $U$ (Figure~\ref{fig:U}) and $Q$
(Figure~\ref{fig:Q}). The nontrivial differentials for $U$ are deduced
from corresponding differentials for the sphere.

\begin{figure}[ht]
\centering
\includegraphics{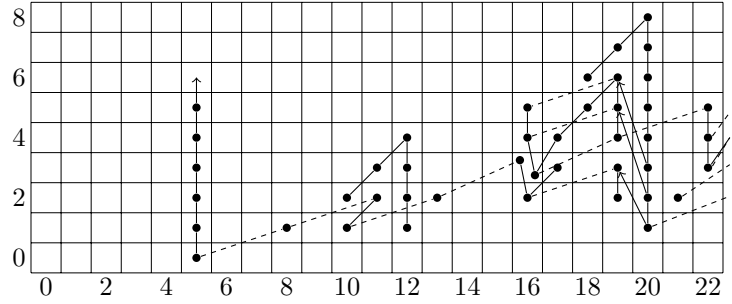}
\caption{The Adams Spectral Sequence for $U$}
\label{fig:U}
\end{figure}

\begin{figure}[ht]
\centering
\includegraphics{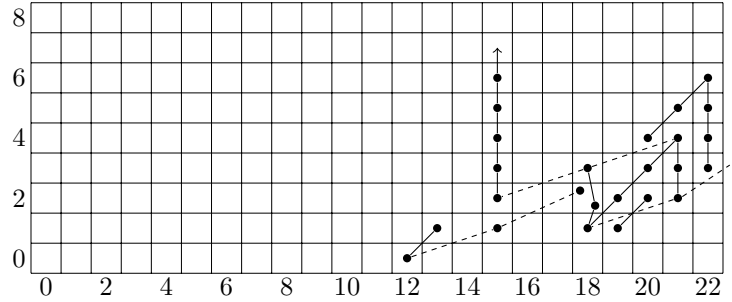}
\caption{The Adams Spectral Sequence for $Q$}
\label{fig:Q}
\end{figure}

We note two things about the homotopy of $U$.

\begin{itemize}
\item First, by comparison with the sphere, there are no hidden
multiplication-by-$4$ extensions in total degree $19$.  The image of
$\pi_{19} \Sigma^5 \mb S$ is an index $2$ subgroup isomorphic to $(\mb
Z/2)^2$.

\item Second, let $x$ be any class in total degree $11$.  As
$\eta$-multiplication surjects onto degree $11$, we would have $x =
\eta y$ for some $y$ in total degree $10$.  However, then as
$\eta$-multiplication is surjective onto total degree $17$ we would
have $\sigma y = \eta z$ for some $z$, and therefore
\[
\sigma \eta x = \eta^3 z = 4\nu z.
\]
However, by the previous note there can be no hidden
multiplication-by-$4$ extensions in degree $19$, so $\sigma \eta x =
0$.
\end{itemize}

The attaching map $f\co \Sigma^{-1} Q \to U$ for $S$ must be a
lift of the corresponding $ko$-module attaching map $ko \smsh{} f\co
\Sigma^{-1} ko \smsh{} Q \to ko \smsh{} U$ for $ko \smsh{} S$.  We
display here the Adams charts computing the homotopy groups of the
function spectra parametrizing the possible attaching maps.

Figure~\ref{fig:DQU} displays the Adams spectral sequence chart for
the homotopy of $F(\Sigma^{-1} Q, U) \simeq \Sigma DQ \smsh{} U$.

\begin{figure}[ht!]
\centering
\includegraphics{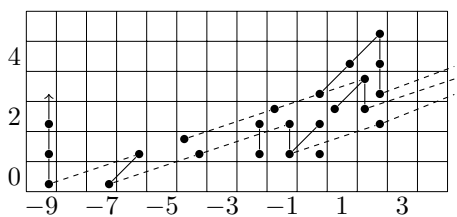}
\caption{The Adams $E_2$-term for $F(\Sigma^{-1}Q,U)$}
\label{fig:DQU}
\end{figure}

The Adams spectral sequence chart for $F_{ko}(\Sigma^{-1} ko\smsh{} Q,
ko \smsh{} U) \simeq ko \smsh{} F(\Sigma^{-1} Q, U)$ is shown in
Figure~\ref{fig:koDQU}.

\begin{figure}[ht!]
\centering
\includegraphics{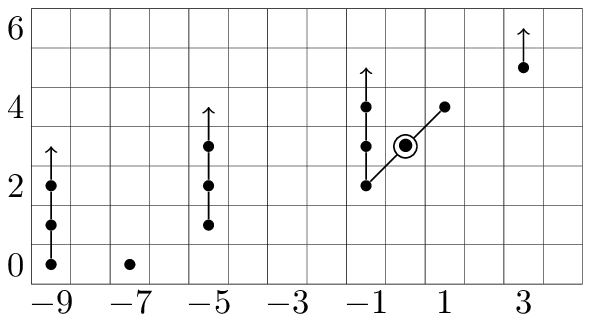}
\caption{The Adams $E_2$-term for $ko\smsh{}F(\Sigma^{-1} Q, U)$}
\label{fig:koDQU}
\end{figure}

We note that there is a unique nontrivial attaching map over $ko$; the
homotopy computations of \cite{ahl} show that the attaching
map $ko \smsh{} f$ must be the unique nontrivial element in $\pi_0$ of
$ko\smsh{}F(\Sigma^{-1}Q,U)$. In the figure, this class is circled.
The lift to the sphere must be of Adams filtration $2$ or higher, as a
lift of Adams filtration $1$ would give the cohomology of $S$ visible
squaring operations $\Sq^8$ out of dimensions below $8$.

We then note that the product $(ko \smsh{} f)\eta$ is nontrivial, and
lifts to the unique map $(f\eta)$ over the sphere which is an
$\eta$-multiple.  It has Adams filtration $4$.

Figure~\ref{fig:Shomotopy} is an Adams spectral sequence chart for the
homotopy of $S$.  The indicated arrows are {\em not} necessarily
differentials; they describe the unique nontrivial map $g\co
\Sigma^{-1} Q \to U$ of Adams filtration 3 in $\Ext$.  We note that
$g$ and $f$ agree on multiples of $\eta$, and so these do describe
$d_3$ differentials on multiples of $\eta$.

\begin{figure}[ht!]
\centering
\includegraphics{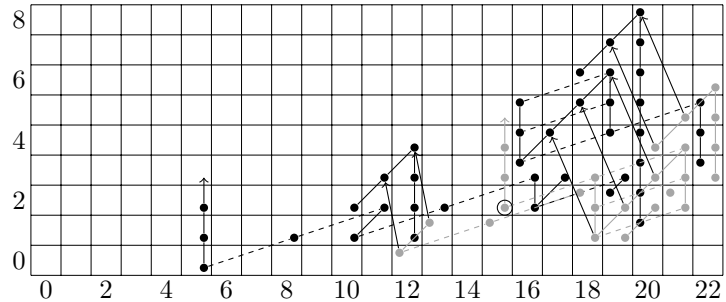}
\caption{The Adams $E_2$-term for $S$, with map of filtration 3}
\label{fig:Shomotopy}
\end{figure}

In particular, there must be a $d_3$ differential out of degree $(t-s,s)
= (19,2)$.  By comparing with the spectral sequences for $Q$ and $U$,
we find that the only other possible differential supported on a class
in total degree $19$ would be a $d_5$ on the class in degree
$(19,1)$.  However, this class is $\sigma y$ for the class $y$ in
bidegree $(12,0)$, and as previously noted we must have $\sigma \eta
f(y) = 0$ where $f$ is the attaching map.  Therefore, the specified
$d_5$ differential does not exist and the class in degree $(19,1)$
survives to homotopy.

Figure~\ref{fig:koShomotopy} describes the Adams $E_3$ page for the homotopy
of $ko \smsh{} S$.  The indicated differentials are the image of $ko
\smsh{} g = ko \smsh{} f$.

\begin{figure}[ht!]
\centering
\includegraphics{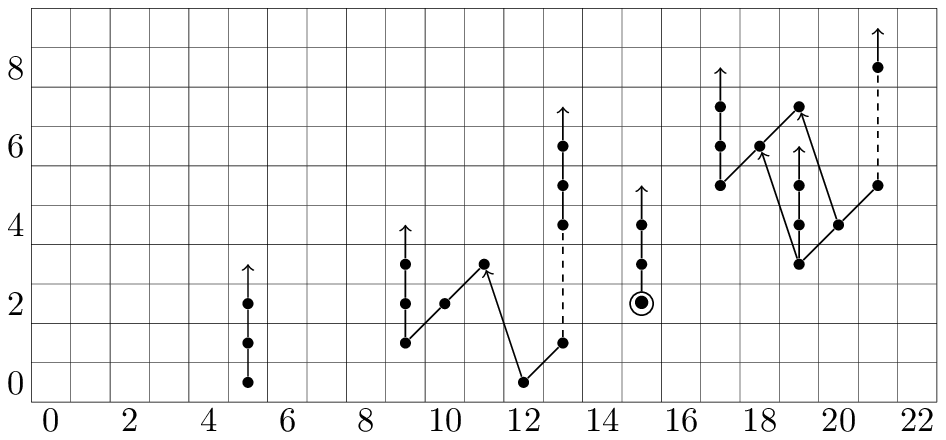}
\caption{The Adams $E_3$-term for $ko \smsh{} S$}
\label{fig:koShomotopy}
\end{figure}

Comparing these, we find that the (marked) attaching map $ko \smsh{}
h$ for the $16$-cell has two possible lifts to a map $h$ over the
sphere up to multiplication by a $2$-adic unit: there is one map in
Adams filtration $1$ and one map in Adams filtration $2$.  These two
lifts differ by a $2$-torsion element (as the image is torsion-free),
and so the element $2\nu h$ is uniquely defined.  One possible choice
of $h$ is marked in Figure~\ref{fig:Shomotopy}.

We claim that there is a hidden extension
\[
2\nu h \neq 0.
\]
As a result, by the previous lemma the attaching map for the $20$-cell
cannot possibly lift.

The image of $\nu h$ is $2$-torsion in $\pi_* Q$, and hidden
multiplication-by-2 can be detected on Ext by the Massey
product $\langle f,\nu h,2 \rangle$.  Multiplying this by $\eta$, we
find
\[
\langle f,\nu h,2\rangle \eta = f(\langle \nu h,2,\eta\rangle).
\]
However, the Massey product $\langle \nu h,2,\eta\rangle$ is the
nontrivial element in bidegree $(20,4)$ in $\pi_* Q$: the
element $\langle \nu h,2,\eta\rangle \eta$ has a nontrivial image under
$f$, and therefore so does $\langle \nu h,2,\eta\rangle$.

(The indeterminacy in the element $f(\langle \nu h,2,\eta\rangle)$
consists of elements $f(y\eta)$ for $y \in \pi_* Q$.  The only nonzero
such image, however, is an element in $\pi_* U$ of bidegree $(19,6)$,
as we ruled out the possibility that the element in bidegree $(20,1)$
has nonzero image under $f$.)
\end{proof}

\nocite{*}
\bibliography{thom}

\end{document}